 \newcommand{\Q}{{\mathcal Q}}
   \theoremstyle{plain}
   \newtheorem{thm}{Theorem}
   \newtheorem{prop}[thm]{Proposition}
   \newtheorem{lem}[thm]{Lemma}
   \newtheorem{cor}[thm]{Corollary}
   \theoremstyle{definition}
   \newtheorem{defn}[thm]{Definition}
   \theoremstyle{remark}
   \newtheorem{remark}[thm]{Remark}
\author{V. Manuilov}
\date{}
\address{Moscow State University,
Leninskie Gory 1, Moscow, 
119991, Russia}
\email{manuilov@mech.math.msu.su}
\title{Approximately uniformly locally finite graphs}
\begin{document}

\maketitle

\begin{abstract}
Let $\Gamma$ be a locally finite graph, $L$ the normalized Laplacian of $\Gamma$. If $\Gamma$ is uniformy locally finite, i.e. if each vertex has no more than $d$ adjacent vertices, then the matrix of $L$ (with respect to the standard basis) has no more than $d+1$ non-zero entries in each row and in each column. We consider the class of locally finite graphs, for which the Laplacian can be approximated by matrices of this type with arbitrary $d$. We provide examples of locally finite graphs which are or are not in this class, and show that the graphs from this class share certain regularity property: vertices of high degree cannot have too many adjacent vertices of low degree. 

\end{abstract}

\section*{Introduction}

Let $\Gamma=(V,E)$ be a simple (i.e. without loops and multiple edges), locally finite (i.e. each vertex has a finite number of adjacent vertices), not necessarily connected graph. Here $V$ and $E$ are the sets of vertices and edges of $\Gamma$, respectively. For $v\in V$, let $d(v)$ denote the degree of $v$, i.e. the number of edges, which $v$ belongs to. For $v,w\in V$ we write $v\sim w$ if there is an edge connecting $v$ and $w$, i.e. if $v$ and $w$ are adjacent. Recall that $\Gamma$ is {\it uniformly} locally finite if there is a constant $d\in\mathbb N$ such that $d(v)\leq d$ for any $v\in V$. Our aim is to find a class of locally finite graphs, which are not uniformly loclally finite, but share, at least appoximatively, some of their features. 

Let $\mathcal D$ be the diagonal operator, $\mathcal Df(v)=d(v)f(v)$, where $f$ is a function on $V$, and let $\mathcal A$ be the adjacency operator, $\mathcal Af(v)=\sum_{w\sim v}f(w)$. Then  $\mathcal L=\mathcal D-\mathcal A$ is the Laplacian on $\Gamma$. It is a (not neccessarily bounded) operator on the Hilbert space $l^2(\Gamma)$ of square-summable functions on $V$ (for $f,g:V\to \mathbb C$, their inner product is $\langle f,g\rangle=\sum_{v\in V}\overline{f(v)}g(v)$).  If one prefers {\it bounded} operators, one has to pass to the normalized Laplacian $L=\mathcal D^{-1/2}\mathcal L\mathcal D^{-1/2}$. Then $L$ is a selfadjoint positive operator on $l^2(\Gamma)$ with $\|L\|\leq 2$. The formula for $L$ is 
$$
Lf(v)=f(v)-\sum_{w\sim v}\frac{1}{\sqrt{d(v)d(w)}}f(w). 
$$
It is convenient to use also the normalized adjacency operator $A=\mathcal D^{-1/2}\mathcal A\mathcal D^{-1/2}$, $L=I-A$. Our references on graph theory are \cite{Chung} and \cite{Bollobas}.

Denote by $\delta_v$ the characteristic function of the vertex $v\in V$. These functions form the standard orthonormal basis for $l^2(\Gamma)$. For an operator $B$ on $l^2(\Gamma)$ and for $v,w\in V$ we write $B_{vw}$ for the matrix entries of the matrix of $B$ with respect to the standard basis. As we will not use other bases for $l^2(\Gamma)$, we shall make no difference between operators and their matrices. 

Let $\mathbb B^{(d)}=\mathbb B^{(d)}(l^2(\Gamma))$ denote the set of bounded operators on $l^2(\Gamma)$, whose matrices with respect to the standard orthonormal basis of $l^2(\Gamma)$ (which consists of characteristic functions of vertices) satisfy the following property: each line and each column of the matrix of $L$ has no more than $d$ non-zero entries. Note that $\mathbb B^{(d)}$ is not a linear space, but if $A\in\mathbb B^{(d)}$ and $B\in\mathbb B^{(d')}$ then $A+B\in\mathbb B^{(d+d')}$. Set $\mathbb B^{(\infty)}=\cup_{d\in\mathbb N}\mathbb B^{(d)}$ and $\mathbb B=\overline{\mathbb B^{(\infty)}}$, where the closure is taken with respect to the operator norm on $l^2(\Gamma)$. It was shown in \cite{Manuilov} that $\mathbb B$ is a $C^*$-algebra.

If $\Gamma$ is uniformly locally finite, i.e. if $d(v)\leq d$ for any $v\in V$, then obviously $L\in\mathbb B{(d+1)}$. The opposite is also true: if $L\in\mathbb B^{(d)}$ then $\Gamma$ is uniformly locally finite. As $A=I+L$, the same holds true for the normalized adjacency operator as well.

Let us introduce a class of locally finite graphs, for which the normalized Laplacian can be approximated by operators from $\mathbb B^{(d)}$, $d\in\mathbb N$.

\begin{defn}
A graph $\Gamma$ is {\it approximately uniformly locally finite} (AULF) if $L\in\mathbb B$.

\end{defn}

We shall show that the class of AULF graphs is much wider than that of uniformly locally finite graphs, but AULF graphs still have some weakly regular properties, in particular, vertices of high degree cannot have too many adjacent vertices of low degree.
 
We write $\Gamma\in AULF$ if $\Gamma$ is an AULF graph.

\section{Conditions for being AULF}

Let $\Gamma=(V,E)$ be a graph, and let $\Gamma'=(V,E')$ be another graph with the same set of vertices and with additional edges, i.e. $E\subset E'$. In what follows, we write $d(v)$ for the degree of the vertex $v$ in $\Gamma$, and $d'(v)$ for its degree in $\Gamma'$. The condition $E\subset E'$ implies that $d(v)\leq d'(v)$ for any $v\in V$. We use $v\sim w$ if $v$ and $w$ are adjacent in $\Gamma$, and $v\sim' w$ if they are adjacent in $\Gamma'$.

\begin{thm}\label{add}
Suppose that
\begin{enumerate}
\item[{\rm (i)}]
$d'(v)\leq d(v)+1$ for any $v\in V$;
\item[{\rm (ii)}]
if $v\sim w$ then $d'(v)+d'(w)\leq d(v)+d(w)+1$ for any $v,w\in V$;
\item[{\rm (iii)}]
if $u\sim v$ and $v\sim w$ and $u\neq w$ then $d'(u)+d'(v)+d'(w)\leq d(u)+d(v)+d(w)+1$ for any $u,v,w\in V$.
\end{enumerate}
Then $\Gamma\in AULF$ iff $\Gamma'\in AULF$.

\end{thm}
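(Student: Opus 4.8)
The plan is to compare the normalized Laplacians $L$ of $\Gamma$ and $L'$ of $\Gamma'$ and show that their difference $L - L'$ is a bounded operator lying in $\mathbb{B}^{(N)}$ for some fixed $N$ depending only on the universal bounds in conditions (i)–(iii). Since $\mathbb{B}$ is a $C^*$-algebra (hence a linear space closed in operator norm) and $\mathbb{B}^{(N)} \subset \mathbb{B}$, this would give $L \in \mathbb{B} \iff L' \in \mathbb{B}$, which is exactly the claim $\Gamma \in AULF \iff \Gamma' \in AULF$. So the whole theorem reduces to controlling the matrix $D := L - L'$: I must bound its operator norm and bound the number of nonzero entries per row and column.

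Let me write out the entries. For the diagonal, $D_{vv} = L_{vv} - L'_{vv} = 0$ since both are $1$ by the formula $Lf(v) = f(v) - \sum_{w\sim v}\frac{1}{\sqrt{d(v)d(w)}}f(w)$. For $v \neq w$, using $E \subset E'$, I split the off-diagonal entries into two types. If $v \sim w$ (an edge of $\Gamma$, hence also of $\Gamma'$), then $D_{vw} = -\frac{1}{\sqrt{d(v)d(w)}} + \frac{1}{\sqrt{d'(v)d'(w)}}$, and the key observation is that conditions (i)–(iii) force $d'(u)=d(u)$ for \emph{most} vertices near such an edge, so these entries are small; I would estimate $|D_{vw}|$ using $d'(v) \le d(v)+1$ and the elementary inequality $\left|\frac{1}{\sqrt{ab}} - \frac{1}{\sqrt{a'b'}}\right|$ for $a \le a' \le a+1$, $b \le b' \le b+1$, which decays in $a,b$. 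If $v \sim' w$ but $v \not\sim w$ (a new edge, present only in $\Gamma'$), then $D_{vw} = +\frac{1}{\sqrt{d'(v)d'(w)}}$, a genuinely new nonzero entry.

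The sparsity bound is where conditions (ii) and (iii) do their real work, and I expect this to be the main obstacle. For a fixed vertex $v$, I must bound how many $w$ have $D_{vw} \neq 0$. New edges at $v$ are controlled by (i): since $d'(v) \le d(v)+1$, vertex $v$ gains at most one new neighbor, and by (ii) that new edge forces $d'(w) = d(w)$ and keeps $d'(v) = d(v)+1$ tight. The subtler point is bounding the \emph{old} edges $v \sim w$ at which $D_{vw} \neq 0$, i.e. at which some degree strictly increases: I claim conditions (ii) and (iii) limit this to a bounded number. Concretely, if $d'(v) = d(v)+1$ then (ii) forces $d'(w) = d(w)$ for every old neighbor $w$ of $v$, so along old edges the discrepancy lives on only one endpoint; and if instead $d'(v)=d(v)$, then (ii) allows at most the neighbors $w$ with $d'(w)=d(w)+1$, whose count I must bound — this is precisely where (iii) enters, ruling out two distinct neighbors $u,w$ of $v$ both having increased degree via the path $u \sim v \sim w$. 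I expect to extract from (i)–(iii) a uniform constant $N$ (something like $N=2$ or a small number) bounding the nonzero off-diagonal entries per row, and by symmetry per column.

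Finally, with the sparsity bound $D \in \mathbb{B}^{(N)}$ in hand, I still need $D$ to be a \emph{bounded} operator so that it lies in $\mathbb{B}^{(\infty)} \subset \mathbb{B}$; for matrices with uniformly boundedly many nonzero entries per row and column, boundedness follows from a Schur-test estimate $\|D\| \le N \cdot \sup_{v,w}|D_{vw}|$, and the entrywise bounds from the second paragraph show $\sup|D_{vw}|$ is finite (indeed bounded by a constant like $1$ or $\sqrt{2}$, since degrees are at least $1$). Once $D \in \mathbb{B}$, I conclude by writing $L = L' + D$ and invoking that $\mathbb{B}$ is a linear space to transfer membership in either direction. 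I would double-check the degenerate cases (isolated vertices, $d(v)=0$, vertices of degree one) separately, since the formula for $L$ involves $d(v)^{-1/2}$ and these could require a small convention, but they should not affect the structural argument.
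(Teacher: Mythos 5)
Your reduction of the theorem to showing $L'-L\in\mathbb B$ is the paper's starting point too, and your treatment of the new edges (at most one per vertex, by (i)) and of the diagonal part is fine. The gap is in your central sparsity claim: $L'-L$ does \emph{not} lie in $\mathbb B^{(N)}$ for any fixed $N$, so the plan of exhibiting the difference as a single matrix-finite operator cannot close. The point you miss is that the entry at an old edge $v\sim w$ is $\frac{1}{\sqrt{d(v)d(w)}}-\frac{1}{\sqrt{d'(v)d'(w)}}$, which depends on the degrees of \emph{both} endpoints; if $d'(v)=d(v)+1$, this entry is non-zero for \emph{every} old neighbour $w$ of $v$, even though (as you correctly note) (ii) forces $d'(w)=d(w)$ for all such $w$. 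Hence the $v$-row of $L'-L$ has $d(v)$ non-zero entries, and $d(v)$ is unbounded over the set $V_1=\{v:d'(v)=d(v)+1\}$ because $\Gamma$ is not assumed uniformly locally finite. Concretely: let $\Gamma$ be a disjoint union of stars with centres $c_n$ of degree $n$ together with a sequence of isolated edges, and let $\Gamma'$ join $c_n$ to an endpoint of the $n$-th isolated edge; conditions (i)--(iii) all hold, yet the $c_n$-row of $L'-L$ has $n+1$ non-zero entries. With the sparsity bound gone, your Schur-test estimate $\|D\|\le N\sup_{v,w}|D_{vw}|$ also has nothing to rest on.

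What rescues the theorem --- and what the paper actually does --- is that this unboundedly-sparse part of $L'-L$ is only \emph{approximated} by elements of $\mathbb B^{(\infty)}$ rather than contained in one of them. The old-edge part $B$ decomposes as a direct sum, over $v\in V_1$, of operators $\overline B_v$ supported on the subspaces $H_v$ spanned by $\delta_v$ and the $\delta_w$ with $w\sim v$; these subspaces are mutually orthogonal for distinct $v,w\in V_1$, and this is exactly where condition (iii) is used (two vertices of $V_1$ cannot share a common neighbour). Each block satisfies $\|\overline B_v\|\le 1/d(v)$, since its $d(v)$ non-zero entries are each of size $O(d(v)^{-3/2})$. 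Discarding the blocks with $d(v)\ge 1/\varepsilon$ changes $B$ by at most $\varepsilon$ in norm and leaves an operator in $\mathbb B^{([1/\varepsilon])}$; this truncation over the direct-sum decomposition is the idea your proposal is missing, and without it the argument does not go through.
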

\begin{proof}
Let us write $L$ and $L'$ for the normalized Laplacians on $\Gamma$ and on $\Gamma'$ respectively. The statement would follow if we show that $L'-L$ can be approximated by operators from $\mathbb B^{(\infty)}$. Write $L'-L$ as a sum of three operators, $L'-L=B+C+D$, where $D$ is diagonal, 
$$
D_{vw}=\left\lbrace\begin{array}{cl}\frac{1}{d(v)}-\frac{1}{d'(v)},&\mbox{if\ }w=v;\\0,&\mbox{otherwise,}\end{array}\right.
$$
$B$ and $C$ are off-diagonal,
$$
B_{vw}=\left\lbrace\begin{array}{cl}\frac{1}{\sqrt{d(v)d(w)}}-\frac{1}{\sqrt{d'(v)d'(w)}},&\mbox{if\ }w\sim v;\\0,&\mbox{otherwise,}\end{array}\right.
$$
$$
C_{vw}=\left\lbrace\begin{array}{cl}-\frac{1}{\sqrt{d'(v)d'(w)}},&\mbox{if\ }w\sim' v \mbox{\ but\ }w\nsim v;\\0,&\mbox{otherwise,}\end{array}\right.
$$
i.e. $B$ corresponds to the edges from $E$, and $C$ corresponds to the added edges.

It follows from (ii) that for any $v\in V$ there is no more than one vertex $w$ such that $w\sim' v$, but $w\nsim v$, hence the matrix $C$ has no more than one non-zero element in each line and in each column, so $C\in\mathbb B^{(1)}$. Also $D$, being diagonal, is patently in $\mathbb B^{(1)}$. Thus, it remains to approximate $B$. 

To this end, consider the set $V_1\subset V$, $V_1=\{v\in V:d'(v)=d(v)+1\}$. Let $v\in V_1$, $d(v)=d$, and let $P_v$ be the operator given by the matrix
$$
(P_v)_{uw}=\left\lbrace\begin{array}{rl}\frac{1}{\sqrt{d\cdot d(w)}}-\frac{1}{\sqrt{(d+1)d'(w)}},&\mbox{if\ }u=v \mbox{\ and\ }w\sim v;\\0,&\mbox{otherwise.}\end{array}\right.
$$
Note that by (ii), if $v\in V_1$ and $w\sim v$ then $d'(w)=d(w)$.

Set $B_v=P_v+P_v^*$. As $L'_{vw}-L_{vw}\neq 0$ only if either $v$ or $w$ lies in $V_1$, $B=\sum_{v\in V_1}B_v$ (we understand the infinite sum here as convergence with respect to the strong operator topology). 

Let $H_v\subset l^2(\Gamma)$ be the finitedimensional Hilbert space generated by the characteristic functions of $v$ and of all $w$ which are adjacent to $v$, and let $Q_v$ denote the projection onto $H_v$. Set $\overline{B}_v=Q_vB_v|_{H_v}$. If $v\notin V_1$ we assume that $\overline{B}_v=0$. Note that $H_v$ is invariant for $B_v$, and that $B_v$ is zero on the orthogonal complement of $H_v$, Note also that if $v,w\in V_1$ then $H_v$ and $H_w$ are orthogonal to each other. Indeed, if there is some $u\in V$ such that $u\sim v$ and $u\sim w$ then (iii) contradicts $v,w\in V_1$. Thus, the sum above is a direct sum: $B=\oplus_{v\in V_1}\overline{B}_v$. 

Let us estimate the norm of $\overline{B}_v$, where $d(v)=d$. If $w\sim v$ then $d(w)\geq 1$. As the matrix of $\overline{B}_v$ has only one non-zero row and only one non-zero column,  
\begin{eqnarray*}
\|\overline{B}_v\|&=&2\|P_v\|=2\sqrt{\sum_{w\sim v}\left(\frac{1}{\sqrt{d\cdot d(w)}}-\frac{1}{\sqrt{(d+1)d(w)}}\right)^2}\\
&\leq&2\sqrt{\sum_{w\sim v}\left(\frac{1}{\sqrt{d}}-\frac{1}{\sqrt{d+1}}\right)^2}=2\frac{\sqrt{d+1}-\sqrt{d}}{\sqrt{d(d+1)}}\sqrt{d}\leq \frac{1}{d}. 
\end{eqnarray*}

Take $\varepsilon>0$, and set 
$$
\widetilde B_v=\left\lbrace\begin{array}{cl}\overline{B}_v,&\mbox{if\ }d(v)<\varepsilon;\\0,&\mbox{otherwise,}\end{array}\right.
$$ 
$\widetilde B=\oplus_{v\in V_1}\widetilde B_v$. Then 
$$
\|B-\widetilde B\|=\sup_{v\in V_1}\|\overline{B}_v-\widetilde B_v\|<\varepsilon.
$$ 
Obviously, each $\widetilde B_v$ has no more that $d$ non-zero entries in each row and in each column, hence $\widetilde B\in \mathbb B^{([\frac{1}{\varepsilon}])}$.

\end{proof}

\begin{cor}\label{connect}
Let $\Gamma$ be not connected, with connected components $\Gamma_n$, and let $\Gamma'$ be the connected graph obtained from $\Gamma$ by connecting $\Gamma_n$ with $\Gamma_{n+1}$ by an edge. Then $\Gamma\in AULF$ iff $\Gamma'\in AULF$. 

\end{cor}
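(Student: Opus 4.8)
The plan is to reduce the statement to Theorem \ref{add} by adding the connecting edges in two batches. A single application is not enough: each interior component $\Gamma_n$ acquires edges to both $\Gamma_{n-1}$ and $\Gamma_{n+1}$, so a vertex used for both connections would gain degree $2$, violating (i); and even if distinct vertices are used, two connecting vertices lying in the same small component (say a component equal to a single edge) could be adjacent or share a neighbour, violating (ii) or (iii). The point is therefore to arrange that, at each application of the theorem, every component is incident to at most one new edge.

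First I would fix, in each component $\Gamma_n$, two distinct vertices: one, $r_n$, to carry the edge joining $\Gamma_n$ to $\Gamma_{n-1}$, and another, $s_n\neq r_n$, to carry the edge joining $\Gamma_n$ to $\Gamma_{n+1}$. This is possible because the normalized Laplacian is only defined in the absence of isolated vertices, so every component has at least two vertices. Next I would split the connecting edges according to the parity of the lower index: let $M_1$ consist of the edges joining $\Gamma_{2k-1}$ to $\Gamma_{2k}$, and $M_2$ of the edges joining $\Gamma_{2k}$ to $\Gamma_{2k+1}$. Each $M_i$ is a matching on the ``path of components'', so every component is incident to at most one edge of $M_1$ and at most one edge of $M_2$. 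Put $\Gamma^{(1)}=(V,E\cup M_1)$, so that $\Gamma'=(V,E\cup M_1\cup M_2)$.

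I would then apply Theorem \ref{add} twice. For the pair $(\Gamma,\Gamma^{(1)})$ every edge and every length-two path of $\Gamma$ lies inside a single component, which contains at most one endpoint of an $M_1$-edge; hence at most one vertex in each such configuration has its degree raised, so (i)--(iii) hold and $\Gamma\in AULF$ iff $\Gamma^{(1)}\in AULF$. For the pair $(\Gamma^{(1)},\Gamma')$ condition (i) again holds since each component meets at most one $M_2$-edge. The extra work is in (ii) and (iii), because $\Gamma^{(1)}$ now carries the cross-component edges of $M_1$; here the choice $r_n\neq s_n$ is exactly what is needed, since the endpoints of an $M_1$-edge are the vertices $s_{2k-1}$ and $r_{2k}$, while the $M_2$-connecting vertices of those components are $r_{2k-1}$ and $s_{2k}$, which are different. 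Thus neither endpoint of an $M_1$-edge is an $M_2$-connecting vertex, and every edge and every length-two path of $\Gamma^{(1)}$ again contains at most one vertex whose degree increases, so (ii) and (iii) hold and $\Gamma^{(1)}\in AULF$ iff $\Gamma'\in AULF$. Chaining the two equivalences yields the corollary.

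The main obstacle is precisely the second application of the theorem: one must verify (ii) and (iii) against the edges of $M_1$ that already cross between components in $\Gamma^{(1)}$, and it is the distinctness of the two connecting vertices in each component that prevents both endpoints of such an edge — or all three vertices of a length-two path through it — from simultaneously gaining degree.
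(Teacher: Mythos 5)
Your parity splitting of the connecting edges into two batches ($M_1$ joining $\Gamma_{2k-1}$ to $\Gamma_{2k}$, then $M_2$ joining $\Gamma_{2k}$ to $\Gamma_{2k+1}$) is exactly the device used in the paper, and your verification of conditions (i)--(iii) for both applications of Theorem \ref{add} is correct \emph{for the graph $\Gamma'$ that you build}. The gap is that you build $\Gamma'$ rather than receive it: you \emph{choose} two distinct connecting vertices $r_n\neq s_n$ in each component, whereas the corollary --- as the paper's own proof makes explicit by introducing the given vertices $v_n,v'_n$ and discussing the case $v_n=v'_n$ --- is about an arbitrary prescribed $\Gamma'$, in which the vertex of $\Gamma_n$ joined to $\Gamma_{n-1}$ may coincide with (or be adjacent to, or share a neighbour with) the vertex joined to $\Gamma_{n+1}$. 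In the worst case $v_n=v'_n$ for all $n$, your two-step scheme fails exactly where you anticipated trouble: after adding $M_1$, the vertices $v_{2n-1}$ and $v_{2n}$ are adjacent in $\Gamma^{(1)}$, and in the second batch \emph{both} must gain a degree ($v_{2n-1}$ receives the edge from $\Gamma_{2n-2}$, $v_{2n}$ sends the edge to $\Gamma_{2n+1}$), so condition (ii) (and (iii)) is violated and Theorem \ref{add} cannot be applied to the pair $(\Gamma^{(1)},\Gamma')$.

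The paper closes this case with a \emph{third} application of the theorem: the second batch is itself subdivided by parity, first adding the edges joining $v_{4n-2}$ to $v_{4n-1}$ and then the remaining ones, so that at each stage no edge or length-two path of the current graph meets more than one degree-increasing vertex. As written, your argument proves only the weaker statement that \emph{some} choice of connecting edges yields a graph AULF-equivalent to $\Gamma$ --- which does suffice for the paper's later use of Corollary \ref{connect} (turning the disconnected examples into connected ones), but not for the corollary as proved. To repair it, either state explicitly that the connecting vertices are at your disposal, or add the third stage handling the case where a single vertex of a component carries both connecting edges.
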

\begin{proof}
Apply Theorem \ref{add} not more than three times. Let $v_n$, $v'_n$ be the vertices in $\Gamma_n$ such that the edge of $\Gamma'$ connecting $\Gamma_n$ with $\Gamma_{n+1}$ connects $v'_n$ with $v_{n+1}$. If $v_n$ and $v'_n$ are not adjacent then it is enough to apply Theorem \ref{add} once. If some (or all) $v_n$ and $v'_n$ are adjacent then we should pass from $\Gamma$ to $\Gamma'$ in two steps: first, add the edges connecting $\Gamma_{2n-1}$ with $\Gamma_{2n}$, $n\in\mathbb N$, and, second, add the remaining edges. We cannot do both steps in one time, as this contradicts the condition (iii) of Theorem \ref{add}. The worst case is when $v_n=v'_n$ for all $n\in\mathbb N$. Then the first step is the same as above, but the condition (iii) prevents to make the second step, and we have to divide it again into two steps: first, add the edges connecting $v_{4n-2}$ with $v_{4n-1}$ for all $n\in\mathbb N$, and, second, add the remaining edges.

\end{proof}
 
The next example shows that if vertices with high degree have enough adjacent vertices with low degree then the graph is not AULF.

\begin{thm}\label{T1}
For a sequence $\{v_n\}_{n\in\mathbb N}$ of vertices in $\Gamma$, let $s_n^k$ denote the number of vertices $w$ adjacent to $v_n$ with $d(w)\leq k$. Suppose that
\begin{enumerate}
\item
$\lim_{n\to\infty}d_n=\infty$, where $d_n=d(v_n)$;
\item
$\lim\sup_{n\to\infty}\frac{s_n^k}{d_n}>0$ for some $k\in\mathbb N$.
\end{enumerate}
Then $\Gamma\notin AULF$.

\end{thm}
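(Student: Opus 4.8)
The plan is to prove directly that $L$ cannot be approximated in operator norm by elements of $\mathbb B^{(\infty)}$. Since $L=I-A$, the identity $I$ lies in $\mathbb B^{(1)}\subset\mathbb B$, and $\mathbb B$ is a $C^*$-algebra (hence a linear space), it is equivalent to show that the normalized adjacency operator $A$ does not lie in $\mathbb B$. I would test $A$ against the single characteristic functions $\delta_{v_n}$. The key observation is that the $v_n$-th column of $A$, namely $A\delta_{v_n}$, carries a fixed amount of mass spread over many coordinates: one has $(A\delta_{v_n})(w)=A_{w v_n}=\frac{1}{\sqrt{d(w)d_n}}$ for each neighbour $w\sim v_n$, and restricting to the $s_n^k$ neighbours $w$ with $d(w)\le k$, each such entry satisfies $A_{w v_n}\ge\frac{1}{\sqrt{k\,d_n}}$. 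Thus $A\delta_{v_n}$ has at least $s_n^k$ entries of absolute value at least $\frac{1}{\sqrt{k\,d_n}}$.

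Next I would estimate how badly a sparse operator must fail to reproduce this column. Fix any $T\in\mathbb B^{(m)}$. The column $T\delta_{v_n}$ has at most $m$ non-zero entries, so $T_{w v_n}=0$ for all but at most $m$ of the $s_n^k$ heavy positions described above. Since $\|\delta_{v_n}\|=1$, this yields
\[
\|A-T\|^2\ \ge\ \|(A-T)\delta_{v_n}\|^2\ \ge\ (s_n^k-m)\cdot\frac{1}{k\,d_n}\ =\ \frac{1}{k}\Bigl(\frac{s_n^k}{d_n}-\frac{m}{d_n}\Bigr),
\]
and this inequality holds for every $n$.

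Finally I would let $n\to\infty$. By hypothesis (1) we have $d_n\to\infty$, so $m/d_n\to0$ for the fixed $m$; by hypothesis (2) there is $c:=\limsup_n s_n^k/d_n>0$. Taking the $\limsup$ in $n$ on the right-hand side therefore gives $\|A-T\|^2\ge c/k$. As this lower bound is independent of $m$ and of the choice of $T\in\mathbb B^{(m)}$, the distance from $A$ to $\mathbb B^{(\infty)}$ is at least $\sqrt{c/k}>0$, whence $A\notin\mathbb B$, and consequently $\Gamma\notin AULF$. The only point requiring care is the order of quantifiers: the approximant $T$, and hence $m$, may depend on the target accuracy, but the displayed bound holds uniformly in $n$ for each fixed $T$, and it is precisely the growth $d_n\to\infty$ that renders the $m/d_n$ correction negligible. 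I therefore expect no genuine obstacle beyond this bookkeeping; the whole content of the argument is the single test vector $\delta_{v_n}$ together with the pigeonhole fact that a column with $\le m$ nonzero entries must miss almost all of the $s_n^k$ heavy coordinates once $d_n$ is large.
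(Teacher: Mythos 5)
Your proposal is correct and follows essentially the same argument as the paper: test against the single vectors $\delta_{v_n}$, note that the column has $s_n^k$ entries of absolute value at least $\frac{1}{\sqrt{kd_n}}$ while a column of an operator in $\mathbb B^{(m)}$ has at most $m$ non-zero entries, and let $d_n\to\infty$ kill the $m/d_n$ correction. The only cosmetic differences are that you work with $A$ instead of $L$ (equivalent, since they share the relevant off-diagonal entries and $I\in\mathbb B^{(1)}$) and phrase the conclusion as a uniform lower bound on the distance rather than as a contradiction.
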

\begin{proof}
Assume that for any $\varepsilon>0$ there is some $r\in\mathbb N$ and some $B\in\mathbb B^{(r)}$ such that $\|L-B\|<\varepsilon$. Then $\|(L-B)\delta_{v_n}\|<\varepsilon$ for any $n\in\mathbb N$, where $\delta_{v}$ denotes the characteristic function of the vertex $v$. Take $\varepsilon<\lim\sup_{n\to\infty}\frac{s_n^k}{d_nk}$.

Since $B\in\mathbb B^{(r)}$, the vector $B\delta_{v_n}$ has not more than $r$ non-zero coordinates. On the other hand, the vector $L\delta_{v_n}$ has $s_n^k$ coordinates with the absolute value greater or equal to $\frac{1}{\sqrt{d_nk}}$. Therefore, $\|(L-B)\delta_n\|^2\geq (s_n^k-r)\frac{1}{d_nk}$ for any $n\in\mathbb N$, hence 
$$
\lim\sup_{n\to\infty}\|(L-B)\delta_{v_n}\|^2\geq \lim\sup_{n\to\infty}\frac{s_n^k-r}{d_nk}=\lim\sup_{n\to\infty}\frac{s_n^k}{d_nk}>\varepsilon. 
$$
This contradiction finishes the proof. 

\end{proof}

In particular, if $\Gamma\in AULF$, but not uniformly locally finite then, by adding enough degree one vertices and corresponing edges (leaves), we get a graph not in AULF.

\section{Examples}

\subsection{Unions of complete graphs}

Let $K_n$ be the complete graph with the set $V_n=\{1,2,\ldots,n\}$ of vertices, $\Gamma_K=\sqcup_{n\in\mathbb N}K_n$. 

\begin{prop}\label{proj}
$\Gamma_K\in AULF$.

\end{prop}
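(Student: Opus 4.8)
The plan is to exploit the explicit spectral structure of the normalized Laplacian of a complete graph and then replace each block by the scaled Laplacian of a sparse expander. Since every vertex of $K_n$ has degree $n-1$, the normalized Laplacian of the $n$-th block is
$$
L_n=\frac{n}{n-1}\left(I_n-P_n\right),
$$
where $P_n$ is the rank-one orthogonal projection onto the constant vector $\eta_n=\frac{1}{\sqrt n}(1,\dots,1)^{t}$ (so the matrix of $P_n$ has all entries equal to $\frac1n$). The operator $L$ on $\Gamma_K$ is the direct sum $L=\oplus_n L_n$ over the disjoint blocks, so it suffices, given $\varepsilon>0$, to produce a single $d=d(\varepsilon)$ and, for every $n$, a matrix $B_n$ with at most $d$ non-zero entries in each row and column and $\|L_n-B_n\|<\varepsilon$; then $B=\oplus_n B_n\in\mathbb B^{(d)}$ and $\|L-B\|=\sup_n\|L_n-B_n\|<\varepsilon$, whence $L\in\mathbb B$.

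The essential point is that approximating $L_n$ sparsely is the same problem as approximating the flat rank-one projection $P_n$ in operator norm by a sparse self-adjoint matrix, uniformly in $n$, and I would solve this with expander graphs. Fix a large $n$ and let $H_n$ be a connected $d$-regular graph on the vertex set $\{1,\dots,n\}$. Its normalized adjacency operator $A_{H_n}$ has $\eta_n$ as eigenvector with eigenvalue $1$ and all remaining eigenvalues of absolute value at most $\mu(H_n):=\max_{i\ge2}|\lambda_i(A_{H_n})|$. Hence $A_{H_n}-P_n$ annihilates $\eta_n$ and coincides with $A_{H_n}$ on $\eta_n^{\perp}$, so $\|A_{H_n}-P_n\|=\mu(H_n)$. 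Setting
$$
B_n=\frac{n}{n-1}\left(I_n-A_{H_n}\right),
$$
which is the scaled normalized Laplacian of $H_n$ and has at most $d+1$ non-zero entries per row and column, gives $\|L_n-B_n\|=\frac{n}{n-1}\,\mu(H_n)$.

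It remains to make this error uniformly small with $d$ depending only on $\varepsilon$. Here I would invoke the existence of near-Ramanujan graphs: for each fixed $d$ and all sufficiently large $n$ there is a connected $d$-regular $H_n$ on $n$ vertices with $\mu(H_n)\le\frac{2\sqrt{d-1}}{d}+o(1)$ (the $o(1)$ as $n\to\infty$), and $\frac{2\sqrt{d-1}}{d}\to0$ as $d\to\infty$. Given $\varepsilon$, choose $d$ and then $n_0(d)$ so large that $\frac{n}{n-1}\,\mu(H_n)<\varepsilon$ for all $n\ge n_0(d)$. For the finitely many small indices $n<n_0(d)$ take $B_n=L_n$ exactly; each such block has degree $n-1<n_0(d)$, so it contributes at most $n_0(d)$ non-zero entries per row and column. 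The direct sum $B=\oplus_n B_n$ then belongs to $\mathbb B^{(\max(n_0(d),\,d+1))}$ and satisfies $\|L-B\|<\varepsilon$, giving $L\in\mathbb B$.

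The main obstacle is precisely this uniform spectral-gap requirement: one needs, for a single degree $d=d(\varepsilon)$, a family of $d$-regular graphs — one on each large vertex set — whose non-trivial normalized eigenvalues are all below $\varepsilon$. A fixed-degree expander family with merely bounded spectral gap does not suffice, since by Alon–Boppana $\mu(H_n)$ cannot be pushed to $0$ without letting $d\to\infty$; what rescues the argument is that $\varepsilon$ is allowed to fix $d$, after which near-Ramanujan graphs do achieve $\mu\to0$. If one prefers to avoid citing near-Ramanujan existence for all large $n$, the same estimates go through with explicit Ramanujan graphs along the subsequence of admissible vertex counts, the remaining counts being absorbed into the small exact blocks as above.
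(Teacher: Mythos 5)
Your proof is correct and shares the paper's central reduction: write $L_n$ as a scalar combination of $I_n$ and the flat rank-one projection $P_n$, so that everything comes down to approximating $\oplus_n P_n$ uniformly by sparse self-adjoint matrices, which is then done with expanders. (Incidentally, your formula $L_n=\frac{n}{n-1}(I_n-P_n)$ is the correct one; the paper's displayed matrix for $L_n$ with entries $\pm\frac1n$ looks like a typo, since each vertex of $K_n$ has degree $n-1$ --- this does not affect either argument.) Where you genuinely differ is in how the expander step is executed. The paper defers to Higson--Lafforgue--Skandalis, where the approximation of $P_n$ comes from a \emph{fixed-degree} expander family with spectral gap bounded away from $1$: since $\|A_{H_n}^k-P_n\|\le\mu^k$ while $A_{H_n}^k$ has at most $d^k$ nonzero entries per row and column, one lets the power $k\to\infty$ rather than the degree $d\to\infty$ (plus the observation, credited to \cite{Manuilov}, that every size $n$ can be realized). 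You instead let the degree grow and invoke near-Ramanujan (Friedman-type) graphs on every sufficiently large vertex set, so that $\mu(H_n)\le 2\sqrt{d-1}/d+o(1)$ is itself small; your identity $\|A_{H_n}-P_n\|=\mu(H_n)$ and the count of $d+1$ nonzero entries per row and column are both right. Both routes work: the powers trick needs only a bounded-gap family, while yours needs the stronger near-Ramanujan existence for all large $n$ but avoids iterating the matrix. Two small repairs: take $d$ even so that $d$-regular graphs exist on every large $n$ (the parity constraint $dn\equiv 0 \bmod 2$); and your fallback of using explicit Ramanujan graphs ``along the subsequence of admissible vertex counts, the remaining counts being absorbed into the small exact blocks'' does not work as stated, because the inadmissible sizes form an infinite set, so those blocks cannot all be handled exactly --- the Friedman-type existence theorem (or the powers trick) is genuinely needed there.
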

\begin{proof}
As $\Gamma_K$ is the disjoint union of the graphs $K_n$, the normalized Laplacian is the direct sum of normalized Laplacians $L_n$ of each $K_n$. Let $i,j\in V_n$. For each $n$, we have 
$$
(L_n)_{ij}=\left\lbrace\begin{array}{rl}\frac{1}{n},&\mbox{if\ }i=j;\\-\frac{1}{n},&\mbox{otherwise.}\end{array}\right.
$$ 
We shall approximate $\oplus_{n\in\mathbb N}\frac{2}{n}I-L_n$ instead of $\oplus_{n\in\mathbb N}L_n$ ($I$ is patently in $\mathbb B^{(1)}$). Note that $P_n=\frac{2}{n}I-L_n$ is the projection onto the vector $(1,\ldots,1)$. 

The further proof uses expander graphs and essentially is contained in \cite{Higson-Lafforgue-Skandalis}, where it is shown that the direct sum of one-dimensional projections $\oplus_{n\in\mathbb N}P_{m_n}$ lies in the uniform Roe algebra (hence can be approximated by operators in $\mathbb B^{(\infty)}$) for certain increasing sequences $\{m_n\}$ of sizes. It remains to note that one may take $m_n=n$ (cf. \cite{Manuilov}). Equivalently, the sequence $(P_n)_{n\in\mathbb N}$ of projections can be approximated by matrices in $\mathbb B^{(\infty)}$ uniformly in $n$, i.e. for any $\varepsilon>0$ there exists $d\in\mathbb N$ and matrices $C_n\in\mathbb B^{(d)}$ such that $\|P_n-C_n\|<\varepsilon$ for any $n\in\mathbb N$.

\end{proof}

Let $K_{k,l}$, $k\leq l$, denote the complete bipartite graph with the set of vertices $V_{k,l}=\{1,2,\ldots,k\}\cup\{1,2,\ldots,l\}$. For sequences $\kappa=(k_n)_{n\in\mathbb N}$, $\lambda=(l_n)_{n\in\mathbb N}$ of positive integers let $\Gamma_{\kappa,\lambda}=\sqcup_{n\in\mathbb N}K_{k_n,l_n}$ be the disjoint union of all $K_{k_n,l_n}$. 

\begin{prop}
If the sequence $\frac{l_n}{k_n}$, $n\in\mathbb N$, is uniformly bounded then $\Gamma_{\kappa,\lambda}\in AULF$, otherwise $\Gamma_{\kappa,\lambda}\notin AULF$.  

\end{prop}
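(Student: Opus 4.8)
The plan is to reduce the assertion to a uniform block-approximation problem and then to settle the two directions separately against that criterion. Write $A=\oplus_n A_n$ for the normalized adjacency operator of $\Gamma_{\kappa,\lambda}$, so that $A_n$ is the normalized adjacency operator of $K_{k_n,l_n}$. Every vertex of the $k_n$-part has degree $l_n$ and every vertex of the $l_n$-part has degree $k_n$, so $A_n$ has entry $\frac{1}{\sqrt{k_nl_n}}$ on each edge and $0$ elsewhere; equivalently $A_n=pq^*+qp^*$, where $p$ and $q$ are the normalized characteristic functions of the two parts. In particular $A_nq=p$, and since $A_n(p\pm q)=\pm(p\pm q)$ we have $\|A_n\|=1$. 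Because the components are disjoint, $l^2(\Gamma_{\kappa,\lambda})=\oplus_n H_n$ and $A=\oplus_n A_n$; as $I\in\mathbb B^{(1)}$ and $L=I-A$, we get $\Gamma_{\kappa,\lambda}\in AULF$ iff $A\in\mathbb B$. Finally, compressing any $B\in\mathbb B^{(r)}$ to the blocks $H_n$ only decreases the number of non-zero entries per row and column, so $A\in\mathbb B$ iff for every $\varepsilon>0$ there is a single $r$ with $\operatorname{dist}(A_n,\mathbb B^{(r)})<\varepsilon$ for all $n$ at once.

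For the negative direction, assume $l_n/k_n$ is unbounded and pass to a subsequence with $l_n/k_n\to\infty$. I would show that $\varepsilon=\tfrac12$ admits no uniform $r$. Fix $r$ and suppose $B\in\mathbb B^{(r)}$ with $\|A_n-B\|\le\tfrac12$. Using $A_nq=p$ and $\|p\|=\|q\|=1$ gives $\langle Bq,p\rangle\ge\langle A_nq,p\rangle-\tfrac12=\tfrac12$. On the other hand $\langle Bq,p\rangle=\frac{1}{\sqrt{k_nl_n}}\sum B_{ij}$, the sum running over $i$ in the small part and $j$ in the big part. This sum has at most $rk_n$ non-zero terms, since each of the $k_n$ rows indexed by the small part has at most $r$ non-zero entries, and each term obeys $|B_{ij}|\le\|B\|\le 1+\tfrac12$. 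Hence $\tfrac12\sqrt{k_nl_n}\le\sum B_{ij}\le\tfrac32\,rk_n$, i.e. $\sqrt{l_n/k_n}\le 3r$. As $l_n/k_n\to\infty$, for each $r$ there is an $n$ with $l_n/k_n>9r^2$, a contradiction; thus no uniform $r$ exists and $A\notin\mathbb B$.

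For the positive direction, suppose $l_n/k_n\le C$. Mirroring Proposition \ref{proj}, I would build the approximants from bipartite expanders. Given $\varepsilon>0$ fix a large integer $d=d(\varepsilon)$, and for each $n$ take a bipartite graph $H_n$ on the two parts of $K_{k_n,l_n}$ in which every big-part vertex has exactly $d$ small-part neighbours, chosen so that $H_n$ is a good expander (a random such graph, or an explicit Ramanujan bigraph). Then the small-part degrees concentrate around $dl_n/k_n\le dC$, so the normalized adjacency operator $B_n$ of $H_n$ lies in $\mathbb B^{(r)}$ with $r$ of order $dC$, independent of $n$. The operator $B_n$ shares the leading structure of $A_n$ (top eigenvalue $1$ with a nearly uniform eigenvector), and its spectral gap forces $\|A_n-B_n\|$ to be of order $d^{-1/2}$ up to degree fluctuations, hence $<\varepsilon$ for $d$ large; a uniform spectral gap over all admissible $(k_n,l_n)$ then yields $\operatorname{dist}(A_n,\mathbb B^{(r)})<\varepsilon$ for all $n$, so $A\in\mathbb B$ and $\Gamma_{\kappa,\lambda}\in AULF$. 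The ratio bound is used precisely here: it is exactly what keeps the degrees on \emph{both} sides of $H_n$ bounded.

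The negative half above is elementary and I expect it to be routine. The real work lies in the positive half: one must guarantee a uniform spectral gap for a family of (necessarily only approximately biregular) bipartite expanders indexed by all pairs $(k_n,l_n)$ with $l_n/k_n\le C$, and control the error arising from the deviation of the true top singular vectors of $B_n$ from the uniform vectors $p$ and $q$. This is the bipartite analogue of the expander input to Proposition \ref{proj}, and is where I anticipate the main obstacle; I would resolve it by invoking, as in that proposition, the existence of suitable bounded-degree bipartite expander families together with a perturbation estimate for the leading singular vectors.
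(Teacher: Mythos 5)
Your reduction of the problem to finding, for each $\varepsilon>0$, a single $r$ such that every block $A_n$ is within $\varepsilon$ of $\mathbb B^{(r)}$ is correct, and your negative direction is complete and correct. Testing a candidate approximant against the top singular vectors $p,q$ and counting the at most $rk_n$ non-zero entries in the rows indexed by the small part is a clean alternative to the paper's argument, which instead notes that a matrix in $\mathbb B^{(r)}$ with only $k_n$ columns must have at least $l_n-rk_n$ zero rows and computes the norm of the corresponding compression of the all-$\frac{1}{\sqrt{k_nl_n}}$ block; the two routes are of comparable difficulty and both work.

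The positive direction, however, contains a genuine gap, which you have flagged yourself. Your program requires, for \emph{every} pair $(k_n,l_n)$ with $l_n/k_n\le C$, a bipartite graph on the two prescribed parts with degrees bounded on both sides in terms of $d$ and $C$ only, a uniform spectral gap, and a perturbation estimate showing that its normalized adjacency operator is within $\varepsilon$ of the rank-two operator $pq^*+qp^*$. Exact biregularity would force $dl_n/k_n$ to be an integer, which generally fails; once degrees are allowed to fluctuate, $p$ and $q$ are no longer exact singular vectors and the entries $1/\sqrt{d(v)d(w)}$ vary, so the needed estimate is not free, and you neither prove it nor cite a precise statement supplying it. The paper sidesteps all of this: choosing $m_n\in\mathbb N$ with $k_n(m_n-1)<l_n\le k_nm_n$ (so $m_n<c+1$), it writes $B_n$ as $\sqrt{k_n/l_n}$ times a vertical stack of at most $m_n$ copies of the square rank-one projection $P_{k_n}$ (the last copy truncated to $l_n-k_n(m_n-1)$ rows), approximates each copy by the matrices $C_{k_n}\in\mathbb B^{(d)}$ already provided by Proposition \ref{proj}, and concludes by the triangle inequality with error at most $(c+1)\varepsilon$ and an approximant in $\mathbb B^{((m_n+1)d)}$. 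No new expander construction or singular-vector perturbation analysis is needed; I recommend replacing your bipartite-expander program by this stacking reduction to the square case.
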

\begin{proof}
Note that since $\Gamma_{\kappa,\lambda}=\sqcup_{n\in\mathbb N}K_{k_n,l_n}$, the normalized adjacency operator is the direct sum of those of each $K_{k_n,l_n}$, $A=\oplus_n A_n$, and each $A_n$ has the form $A_n=\left(\begin{matrix}0&B_n^*\\B_n&0\end{matrix}\right)$ with respect to the standard basis of $l^2(V_{k_n,l_n})=l^2(\{1,2,\ldots,k_n\})\oplus l^2(\{1,2,\ldots,l_n\})$. The matrix $B_n$ has $k_n$ columns and $l_n$ rows, and each entry is equal to $\frac{1}{\sqrt{k_nl_n}}$. $\Gamma_{\kappa,\lambda}\in AULF$ iff all $B_n$ can be approximated by matrices in $\mathbb B^{(\infty)}$ uniformly in $n$.

First, consider the case when there is some $c>0$ such that $\frac{l_n}{k_n}<c$ for any $n\in\mathbb N$. For each $n$, let $k_n(m_n-1)<l_n\leq k_nm_n$ for some $m_n\in\mathbb N$, $m_n<c+1$. Let $P_{k_n}$ be the square $k_n{\times}k_n$-matrix with all entries equal to $\frac{1}{k_n}$, let $Q_n$ be the matrix of the projection onto the first $l_n-k_n(m_n-1)$ coordinates in $\mathbb C^{k_n}$, and let $R_{k_n}$ denote the first $l_n-k_n(m_n-1)$ rows of $P_{k_n}$, or, equivalently, of $Q_nP_{k_n}$. Then the matrix $B_n$ can be written as $B_n=\sqrt{\frac{k_n}{l_n}}\left(\begin{smallmatrix}P_{k_n}\\ \vdots\\P_{k_n}\\R_{k_n}\end{smallmatrix}\right)$.  

We know from Proposition \ref{proj} that for any $\varepsilon>0$ there exists $d\in\mathbb N$ and matrices $C_n\in\mathbb B^{(d)}$ such that $\|P_{k_n}-C_{k_n}\|<\varepsilon$. Then $\|Q_nP_{n_k}-Q_nC_{k_n}\|<\varepsilon$. Then
$$
\left\|\left(\begin{smallmatrix}P_{k_n}\\ \vdots\\P_{k_n}\\Q_nP_{k_n}\end{smallmatrix}\right)-\left(\begin{smallmatrix}C_{k_n}\\ \vdots\\C_{k_n}\\Q_nC_{k_n}\end{smallmatrix}\right)\right\|\leq m_n\|P_{k_n}-C_{k_n}\|<(c+1)\varepsilon.
$$   
As $C_{k_n}\in \mathbb B^{(d)}$, the matrix $\left(\begin{matrix}C_{n_k}\\\vdots\\C_{n_k}\end{matrix}\right)$ lies in $\mathbb B^{((m_n+1)d)}$, thus the matrices $B_n$ can be uniformly approximated by matrices from $\mathbb B^{(\infty)}$.

Now turn to the second case: assume that there is no upper bound for $\frac{l_n}{k_n}$. Passing to a subsequence, if necessary, we may assume that $\lim_{n\to\infty}\frac{l_n}{k_n}=\infty$. Assume that $B_n$ can be approximated by matrices from $\mathbb B^{(\infty)}$: take $\varepsilon<1/2$, and find $d\in\mathbb N$ and matrices $C_n\in\mathbb B^{(d)}$ such that $\|B_n-C_n\|<\varepsilon$ for any $n\in\mathbb N$. The number of columns in $B_n$ and in $C_n$ equals $k_n$, and as $C_n$ has no more than $d$ non-zero entries in each column, the matrix $C_n$ has at least $l_n-k_nd$ zero rows. Let $S_n$ be the projection onto the coordinates that correspond to the zero rows. Then $\|B_n-C_n\|\geq \|S_n(B_n-C_n)\|=\|B'_n\|$, where $B'_n$ is the $k_n{\times}(l_n-k_nd)$-matrix with each entry equal to $\frac{1}{\sqrt{k_nl_n}}$. The latter norm is easily computable: $\|B'_n\|=\frac{l_n-k_nd}{l_n}$, and is greater than 1/2 for sufficiently great $n$ --- a contradiction.     

Note that the case when $(k_n)_{n\in\mathbb N}$ is bounded, and $\lim_{n\to\infty}l_n=\infty$ easily follows from Theorem \ref{T1}.

\end{proof} 

Remark that although the graphs in the two previous examples were not connected, one can use Corollary \ref{connect} to produce similar connected examples.

\subsection{Unions of regular graphs}

Let $\Gamma_n$ be an $n$-regular graph with $m_n$ vertices, and let $\Gamma=\sqcup_{n\in\mathbb N}\Gamma_n$. If $\Gamma_n$ are complete (i.e. when $m_n=n$) then, by Proposition \ref{proj}), $\Gamma\in AULF$. We do not know if $\Gamma$ is AULF in general. This seem to require fine norm estimates. But we can show that $\Gamma\in AULF$ if the sequence $(m_n)_{n\in\mathbb N}$ grows very slowly.  

\begin{prop}
Let $\Gamma_{n,m}$ denote an $n$-regular graph on $m$ vertices. Let $(m_n)_{n\in\mathbb N}$ be a sequence such that $m_n\geq n$ for any $n\in\mathbb N$, and $\lim_{n\to\infty}\frac{m_n}{n}=1$. Then $\Gamma=\sqcup_{n\in\mathbb N}\Gamma_{n,m_n}\in AULF$.

\end{prop}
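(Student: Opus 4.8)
The plan is to exploit the fact that an $n$-regular graph on $m_n$ vertices differs from the complete graph $K_{m_n}$ only by a sparse ``complement'' that becomes negligible in norm as $n\to\infty$. Since $\Gamma$ is a disjoint union, its normalized Laplacian splits as $L=\oplus_n L_n$, and because each $\Gamma_{n,m_n}$ is $n$-regular we have $\mathcal D=nI$ on the $n$-th block, so $L_n=I-\frac1n A_n$, where $A_n$ is the ordinary $0$--$1$ adjacency matrix of $\Gamma_{n,m_n}$. As $I\in\mathbb B^{(1)}$, it suffices to approximate $\oplus_n\frac1n A_n$ by operators from $\mathbb B^{(\infty)}$.

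First I would write $A_n=J_{m_n}-I-\overline A_n$, where $J_{m_n}$ is the all-ones matrix on $\mathbb C^{m_n}$ and $\overline A_n$ is the adjacency matrix of the complementary graph, which is $r_n$-regular with $r_n=m_n-n-1$. This splits $\frac1n A_n$ into three pieces. The diagonal piece $\frac1n I$ lies in $\mathbb B^{(1)}$. For the all-ones piece, note that $\frac1n J_{m_n}=\frac{m_n}{n}P_{m_n}$, where $P_{m_n}$ is the rank-one projection onto the constant vector. By Proposition \ref{proj} the projections $P_{m_n}$ can be approximated uniformly in $n$ by matrices $C_{m_n}\in\mathbb B^{(d)}$ for a fixed $d$, and since the scalars $\frac{m_n}{n}$ are bounded (they converge to $1$), the operators $\oplus_n\frac{m_n}{n}C_{m_n}\in\mathbb B^{(d)}$ approximate $\oplus_n\frac1n J_{m_n}$ to within a bounded multiple of the chosen $\varepsilon$.

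The crucial point is the complementary piece $\oplus_n\frac1n\overline A_n$. Since $\overline A_n$ is $r_n$-regular and symmetric, its operator norm equals $r_n$, whence
\[
\left\|\tfrac1n\overline A_n\right\|=\frac{r_n}{n}=\frac{m_n}{n}-1-\frac1n\longrightarrow 0
\]
by the hypothesis $\lim_n m_n/n=1$. Given $\varepsilon>0$, choose $N$ so that $r_n/n<\varepsilon$ for $n\geq N$; then replacing the blocks with $n\geq N$ by zero and keeping the finitely many blocks $n<N$ (which together lie in $\mathbb B^{(d')}$ with $d'=\max_{n<N}r_n$) approximates $\oplus_n\frac1n\overline A_n$ to within $\varepsilon$.

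Finally, adding the three approximants and using that $\mathbb B^{(a)}+\mathbb B^{(b)}\subset\mathbb B^{(a+b)}$ together with the fact that $\mathbb B$ is closed yields $\oplus_n\frac1n A_n\in\mathbb B$, hence $L\in\mathbb B$ and $\Gamma\in AULF$. The only genuinely nontrivial input is Proposition \ref{proj} (the expander-graph approximation of the constant projections), which is already established; the new observation is merely the complement decomposition, and the hypothesis $m_n/n\to1$ is used precisely to force the complement term to vanish in norm. I expect no serious obstacle here: without that hypothesis one would instead face the fine spectral estimates for general regular graphs that the paper flags as open, but the slow-growth condition sidesteps them entirely.
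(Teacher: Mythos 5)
Your proof is correct and follows essentially the same route as the paper: decompose via the complement graph, handle the complete-graph part with Proposition \ref{proj}, and kill the complement part using $\|\overline A_n\|\leq r_n$ together with $r_n/n\to 0$. The only step you assert rather than prove, namely that the adjacency matrix of an $r$-regular graph has norm $r$, is exactly the content of the paper's accompanying lemma (proved there by induction via perfect matchings), and in your symmetric setting it follows at once from the Schur test $\|A\|\leq\sqrt{\|A\|_1\|A\|_\infty}=r$.
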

\begin{proof}
For a graph $G$, let $\overline G$ denote the graph, which is the complement to $G$, i.e. it has the same set of vertices as $G$, and two vertices $v$ and $w$ are adjacent in $\overline{G}$ iff they are not adjacent in $G$. Note that if $G$ is regular then $\overline G$ is regular. For the normalized adjacency operators we have $nA_{\Gamma_{n,m}}=mA_{K_m}-(m-n)A_{\overline{\Gamma}_{n,m}}$, where $K_m$ is the complete graph with the same set of vertices. By Proposition \ref{proj}, $\sqcup_{m\in\mathbb N}K_m\in AULF$, so it remains to approximate $\oplus_{n\in\mathbb N}\frac{m_n-n}{n}A_{\overline{\Gamma}_{n,m_n}}$ by operators from $\mathbb B^{(\infty)}$. This would follow if we show that $\lim_{n\to\infty}\frac{m_n-n}{n}\|A_{\overline{\Gamma}_{n,m_n}}\|=0$. Passing to non-normalized adjacency matrices, this is equivalent to $\lim_{n\to\infty}\frac{1}{n}\|\mathcal A_{\overline{\Gamma}_{n,m_n}}\|=0$, where all non-zero entries in $\mathcal A$ equal 1 (and each row and column of $\mathcal A$ has $m_n-n$ non-zero entries). This would follow from the estimate $\|\mathcal A_{\overline{\Gamma}_{n,m_n}}\|\leq m_n-n$, which is proved in the following Lemma, which should be folklore, but we couldn't find a reference.

\end{proof}

\begin{lem}
Let $A$ be a matrix with all entries equal to 0 or 1, and let each row and each column contain exactly $d$ 1's. Then $\|A\|\leq d$.

\end{lem}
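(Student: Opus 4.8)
The plan is to bound the operator norm directly by estimating $\|Af\|$ for an arbitrary $f\in l^2$ and invoking $\|A\|=\sup_{\|f\|=1}\|Af\|$. The only structural facts I need are that every entry of $A$ lies in $\{0,1\}$ and that every row sum and every column sum equals $d$; no spectral information about $A$ is required. The key analytic tool is the elementary Cauchy--Schwarz (Schur test) bound applied coordinatewise.

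Concretely, fix $f\in l^2$ and look at the $v$-th coordinate $(Af)(v)=\sum_w A_{vw}f(w)$. Since $A_{vw}\in\{0,1\}$ we may write $A_{vw}=\sqrt{A_{vw}}\cdot\sqrt{A_{vw}}$ and apply Cauchy--Schwarz to obtain
$$
|(Af)(v)|^2\leq\Big(\sum_w A_{vw}\Big)\Big(\sum_w A_{vw}|f(w)|^2\Big)=d\sum_w A_{vw}|f(w)|^2,
$$
where the equality uses that the $v$-th row sum equals $d$. Summing over $v$ and interchanging the order of summation,
$$
\|Af\|^2\leq d\sum_w|f(w)|^2\sum_v A_{vw}=d^2\sum_w|f(w)|^2=d^2\|f\|^2,
$$
the inner sum being exactly the $w$-th column sum $d$. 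Hence $\|Af\|\leq d\|f\|$, and since $f$ was arbitrary, $\|A\|\leq d$.

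The one point needing care, and the main (mild) obstacle, is that the Lemma is stated for a general, possibly infinite, matrix, so I must justify interchanging the two summations. This is harmless: all summands $A_{vw}|f(w)|^2$ are nonnegative, so Tonelli's theorem for counting measure permits the swap unconditionally, and the same nonnegativity shows each coordinate sum converges. For the finite matrices to which the Lemma is actually applied the issue is vacuous. I would also remark that the estimate is an instance of the general interpolation inequality $\|A\|_{2\to2}\leq\sqrt{\|A\|_{1\to1}\,\|A\|_{\infty\to\infty}}$, with both the maximal row sum and the maximal column sum equal to $d$; I prefer the direct Cauchy--Schwarz computation above since it is entirely self-contained.
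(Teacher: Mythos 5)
Your proof is correct, but it takes a genuinely different route from the paper. The paper argues by induction on $d$: it realizes $A$ as the biadjacency matrix of a $d$-regular bipartite graph, invokes the existence of a perfect matching in such a graph to split off a permutation matrix $A_\sigma$ (of norm $1$), writes $A=A_\sigma+A_{d-1}$ with $A_{d-1}$ the biadjacency matrix of a $(d-1)$-regular graph, and concludes by the triangle inequality. Your argument is the Schur test: Cauchy--Schwarz in each coordinate followed by an interchange of summation, justified by Tonelli. Your approach is more elementary and more general --- it needs no graph theory, works verbatim for infinite matrices, requires only that the entries be nonnegative, and only that row and column sums be \emph{bounded} by $d$ rather than exactly equal to $d$ (indeed it gives $\|A\|\leq\sqrt{RC}$ where $R$ and $C$ bound the row and column sums). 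The paper's approach buys a stronger structural fact in exchange for the combinatorial input: $A$ decomposes as a sum of $d$ permutation matrices, which is of independent interest but is not needed for the norm bound. Your handling of the interchange of summation for possibly infinite index sets is careful and correct; the paper's induction, by contrast, is phrased for finite $m\times m$ matrices (and that is all that is used in the application). Both proofs are complete and valid.
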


\begin{proof}
Let the dimension of $A$ be $m$. Then there exists a bipartite graph $G=(V,E)$ with two sets of vertices, $V_1=\{i_1,\ldots,i_m\}$ and $V_2=\{j_1,\ldots,j_m\}$, $V=V_1\sqcup V_2$, such that $A_{kl}=1$ iff $i_k\sim j_l$. By assumption, $G$ is $d$-regular. We shall argue by induction in $d$. The statement is obviously true for $d=1$. We have to pass from $d-1$ to $d$. 

Recall that a perfect matching in $G$ is a permutation $\sigma$ of $\{1,\ldots,m\}$ such that $j_{\sigma(r)}\sim i_r$ for any $r\in\{1,\ldots,m\}$. It is known that regular bipartite graphs have perfect matchings \cite{Bollobas}. Let $\sigma$ be one of them. Let $G'$ be the bipartite graph with the same set of vertices, but with the edges that connect $j_{\sigma(r)}$ with $i_r$, $r\in\{1,\ldots,m\}$, removed. If $G$ is $d$-regular, then $G'$ is $d-1$-regular. Write $A=A_\sigma+A_{d-1}$, where 
$$
(A_\sigma)_{kl}=\left\lbrace\begin{array}{cl}1,&\mbox{if\ }l=\sigma(k);\\0,&\mbox{otherwise.}\end{array}\right.
$$
Then $A_{d-1}$ has $d-1$ non-zero entries in each row and in each column, so, by induction assumption, $\|A_{d-1}\|\leq d-1$. As regards $A_\sigma$, note that it is the operator of permutation of the vectors of the basis, hence is unitary, therefore, $\|A_\sigma\|=1$.

\end{proof}

\subsection{Trees}

Now let us consider some trees.
Let $\kappa=(k_0,k_1,k_2,\ldots)$ be a sequence of positive integers. Define a rooted tree by assuming that the root $v_0$ has $d(v_0)=k_0$, each vertex at level 1 (i.e. adjacent to the root) has $d(v)=k_1+1$, each vertex at level 2 (i.e. having distance 2 from the root) has $d(v)=k_2+1$, etc., i.e. each vertex at level $n$ has $d(v)=k_n+1$. Denote this tree by $T(\kappa)$.

\begin{prop}
Suppose that there exists a constant $C>0$ such that $k_n\leq Ck_{n+1}$ for any $n\in\mathbb N$. Then $T(\kappa)\in AULF$.

\end{prop}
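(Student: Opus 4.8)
The plan is to pass to the normalized adjacency operator $A=I-L$; since $I\in\mathbb B^{(1)}$ and $\mathbb B$ is a $C^*$-algebra, hence a closed linear subspace, it is enough to prove $A\in\mathbb B$. I would decompose $A$ along the level structure of the tree. Writing $\ell(v)$ for the level of a vertex, each edge joins a vertex $v$ to one of its children (at level $\ell(v)+1$); so for every $v$ let $A_v$ be the operator supported on $v$ and its children, with off-diagonal entries $\frac{1}{\sqrt{d(v)d(w)}}$ for $w$ a child of $v$ together with their adjoints, and zero elsewhere. Then $A=\sum_v A_v$ (the sum converges strongly, as only the summands for $v=u$ and $v=$ parent of $u$ act nontrivially on a given $\delta_u$). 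The structural point I would isolate is that, separating even and odd levels, $A=A_{\mathrm{ev}}+A_{\mathrm{odd}}$ with $A_{\mathrm{ev}}=\bigoplus_{\ell(v)\ \mathrm{even}}A_v$ and $A_{\mathrm{odd}}=\bigoplus_{\ell(v)\ \mathrm{odd}}A_v$ \emph{orthogonal} direct sums: the supports $\{v\}\cup\{\text{children of }v\}$ are pairwise disjoint among all $v$ of one parity, since distinct vertices have disjoint child-sets and the level-pairs $\{2m,2m+1\}$ partition the levels. By additivity of $\mathbb B$ it then suffices to treat each of $A_{\mathrm{ev}}$ and $A_{\mathrm{odd}}$.

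Next I would estimate the norm of a single star. In block form on center-versus-children coordinates, $A_v=\bigl(\begin{smallmatrix}0&b_v^*\\ b_v&0\end{smallmatrix}\bigr)$, where $b_v$ has $k_n$ equal entries (with $n=\ell(v)$), so that $\|A_v\|=\|b_v\|_2=\sqrt{\frac{k_n}{(k_n+1)(k_{n+1}+1)}}$ for $v$ at a level $n\geq 1$ (and $\sqrt{1/(k_1+1)}$ for the root). In every case $\|A_v\|^2\leq\frac{1}{k_{n+1}+1}$, and this is exactly where the hypothesis is used: from $k_n\leq Ck_{n+1}$ one gets $k_{n+1}\geq k_n/C$, hence $\|A_v\|^2\leq C/k_n$. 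Thus a star with many children (large $k_n$) automatically has small norm --- precisely the regularity that $k_n\leq Ck_{n+1}$ provides, and exactly the behaviour that fails for the leaf-heavy graphs excluded by Theorem \ref{T1}.

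The approximation is then immediate. Given $\varepsilon>0$, choose $N$ with $\sqrt{C/N}<\varepsilon$ and truncate by this threshold: set $C_{\mathrm{ev}}=\bigoplus_{\ell(v)\ \mathrm{even},\,k_{\ell(v)}\leq N}A_v$, keeping only centers with at most $N$ children. Each retained $A_v$ has at most $N$ non-zero entries in every row and column, and because the summands have disjoint supports $C_{\mathrm{ev}}\in\mathbb B^{(N)}$. The discarded part $A_{\mathrm{ev}}-C_{\mathrm{ev}}$ is the orthogonal direct sum of the stars with $k_{\ell(v)}>N$, so its norm equals $\sup\{\|A_v\|:\ell(v)\ \mathrm{even},\,k_{\ell(v)}>N\}\leq\sqrt{C/N}<\varepsilon$. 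Hence $A_{\mathrm{ev}}\in\mathbb B$, identically $A_{\mathrm{odd}}\in\mathbb B$, and therefore $A=A_{\mathrm{ev}}+A_{\mathrm{odd}}\in\mathbb B$ and $L=I-A\in\mathbb B$, i.e. $T(\kappa)\in AULF$.

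The one delicate point --- and the reason the orthogonal-direct-sum decomposition is worth setting up carefully --- is that the norm of the discarded piece is a \emph{supremum} of the $\|A_v\|$ rather than a sum. Without genuine orthogonality of the supports the individually small truncation errors could accumulate and the argument would collapse; the even/odd splitting is introduced solely to guarantee that orthogonality. Everything else (the star-norm computation and the counting of non-zero entries) is routine, so I expect this step to be the crux.
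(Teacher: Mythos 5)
Your proof is correct and follows essentially the same route as the paper: both decompose the normalized adjacency operator into single-parent ``stars'', use disjointness/orthogonality to reduce the truncation error to a supremum of individual star norms $\sqrt{k_n/((k_n+1)(k_{n+1}+1))}$, and discard the stars with large degree. The only cosmetic differences are that the paper works with the lower-triangular half $B$ of $A=B+B^*$ (whose columns are automatically orthogonal because each vertex has a unique parent, so no even/odd splitting is needed) and truncates by the threshold $k_{n+1}<1/\varepsilon^2$, invoking the hypothesis $k_n\leq Ck_{n+1}$ to bound the number of non-zero entries in the retained columns rather than, as you do, the norm of the discarded stars.
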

\begin{proof}
Let us define the set of all vertices of level $n$ by $V_n$. Thus, $V=\sqcup_{n=0}^\infty V_n$ is the set of vertices of $T(\kappa)$. Let us linearly order the set $V$ in such a way that $v\leq w$ if $v\in V_n$, $w\in V_m$, and $n<m$. Decompose the normalized adjacency operator $A$ as $A=B+B^*$, where the operator $B$ is lower-triangular:
$$
B_{vw}=\left\lbrace\begin{array}{cl}A_{vw},&\mbox{if\ }v\geq w;\\0,&\mbox{otherwise.}\end{array}\right.
$$

Let us show that $\langle B\delta_v,B\delta_w\rangle=0$ if $w\neq v$. Note that  
\begin{equation}\label{sum1}
\langle B\delta_v,B\delta_w\rangle=\sum_{u\in V}B_{uv}B_{uw}.
\end{equation}
Consider two cases: (a) $v\in V_n$, $w\in V_m$, $m\neq n$; and (b) $v,w\in V_n$.
In the first case $B_{uv}\neq 0$ only when $u\in V_{n+1}$, and $B_{uw}\neq 0$ only when $u\in V_{m+1}$, hence each summand in (\ref{sum1}) is zero. In the second case $B_{uv}B_{uw}\neq 0$ only if $u\in V_{n+1}$ and $u$ is adjacent to both $v$ and $w$, which cannot happen in a tree.

Let $\xi\in l^2(T)$, $\xi=\sum_{v\in V}\xi_v\delta_v$, $\sum_{v\in V}|\xi_v|^2=1$. Then 
$$
\|B\xi\|^2=\sum_{v,w\in V}\bar\xi_v\xi_w\langle B\delta_v,B\delta_w\rangle=\sum_{v\in V}|\xi_v|^2\|B\delta_v\|^2.
$$ 
It follows that $\|B\|=\sup_{v\in V}\|B\delta_v\|$.

For a subset $U\subset\mathbb N$, define $B^U$ by $B^U\delta_v=\left\lbrace\begin{array}{cl}B\delta_v,&\mbox{if\ }v\in V_n, \mbox{\ where\ }n\in U;
\\0,&\mbox{otherwise.}\end{array}\right.$

The same argument as above shows that 
$
\|B^U\|=\sup_{v\in \sqcup_{n\in U} V_n}\|B\delta_v\|.
$ 

It is easy to see that if $v\in V_n$, $n>0$, then $B\delta_v$ has $d(v)-1=k_n$ non-zero coordinates (corresponding to the descendents of $v$ in $V_{n+1}$), each of which equals $\frac{1}{\sqrt{(k_n+1)(k_{n+1}+1)}}$, hence $\|B\delta_v\|^2=\frac{k_n}{(k_n+1)(k_{n+1}+1)}\leq \frac{1}{k_{n+1}}$.

Take $\varepsilon>0$, and define the subset $U\subset\mathbb N$: $n\in U$ iff $k_{n+1}<\frac{1}{\varepsilon^2}$. Then $\|B-B^U\|=\sup_{v\in \sqcup_{n\notin U} V_n}\|B\delta_v\|<\varepsilon$. Each row in $B^U$ contains only one non-zero entry, so we have to evaluate the number of non-zero entries in each column of $B^U$. The vector $B\delta_v$ is the column corresponding to the vertex $v$, so it has $d(v)=k_n+1$ equal non-zero entries. By assumption, $k_n\leq Ck_{n+1}$, so $k_n+1\leq Ck_{n+1}+1\leq \frac{C}{\varepsilon^2}+1$, hence $B\in \mathbb B^{(s)}$, where $s=\frac{C}{\varepsilon^2}+1$. Since $A=B+B^*$, $A\in\mathbb B^{(2s)}$. 

\end{proof}

\begin{remark}
Note that the existence of $C$ such that $k_n\leq Ck_{n+1}$ is essential. Suppose that there exists some $k\in\mathbb N$ such that $k_{3n-1}\leq k$, $k_{3n-2}\leq k$ for any $n\in\mathbb N$, but $\lim_{n\to\infty}k_{3n}=\infty$. Then, by Theorem \ref{T1}, $T(\kappa)\notin AULF$. Indeed, passing to a subsequence $(n_i)_{i\in\mathbb N}$, we can find $1\leq l\leq k$ such that the descendants $w\in V_{3n_i+1}$ of $v\in V_{3n_i}$ have $d(w)=l+1$ for any $i\in\mathbb N$ (due to the finite number of possibilities for $d(w)$). The number of these descendants equals $s_{3n_i}^{l+1}=k_{3n_i}$, hence $\lim_{i\to\infty}\frac{s_{3n_i}^{l+1}}{k_{3n_i}+1}=1$.  

\end{remark}

\end{document}